\setlist[enumerate,1]{label=(\roman*)}
\numberwithin{equation}{section}
\theoremstyle{plain}
\newtheorem{theorem}    {Theorem}[section]
\newtheorem{lemma}      [theorem] {Lemma}
\newtheorem{conjecture} [theorem] {Conjecture}
\newtheorem{proposition}[theorem] {Proposition}
\newtheorem{corollary}  [theorem] {Corollary}
\theoremstyle{definition}
\newtheorem{claim}      [theorem] {Claim}
\newtheorem{observation}[theorem] {Observation}
\newtheorem*{rep@theorem}{\rep@title}
\newcommand{\newreptheorem}[2]{%
\newenvironment{rep#1}[1]{%
 \def\rep@title{#2 \ref{##1}}%
 \begin{rep@theorem}}%
 {\end{rep@theorem}}}
\newcommand{\defined}{\mathrel{\coloneqq}}
\DeclarePairedDelimiter{\p}{\lparen}{\rparen}
\renewcommand{\le}{\leqslant}
\renewcommand{\leq}{\leqslant}
\renewcommand{\ge}{\geqslant}
\renewcommand{\geq}{\geqslant}
\let\oldexists\exists
\let\exists\relax
\DeclareMathOperator{\exists}{\oldexists}
\let\oldforall\forall
\let\forall\relax
\DeclareMathOperator{\forall}{\oldforall}
\newcommand{\st}{\colon}
\DeclarePairedDelimiter{\set}{\lbrace}{\rbrace}
\DeclarePairedDelimiter{\card}{\lvert}{\rvert}
\newcommand{\mod}[1]{\ (\mathrm{mod}\ #1)}
\DeclarePairedDelimiterX{\abs}[1]
  {\lvert}{\rvert}{\ifblank{#1}{\,\cdot\,}{#1}}
\DeclarePairedDelimiterX{\norm}[1]
  {\lVert}{\rVert}{\ifblank{#1}{\,\cdot\,}{#1}}
\DeclarePairedDelimiterX{\inner}[2]
  {\langle}{\rangle}{\ifblank{#1}{\,\cdot\,}{#1},\ifblank{#2}{\,\cdot\,}{#2}}
\DeclareMathDelimiter{\given}
  {\mathbin}{symbols}{"6A}{largesymbols}{"0C}
\DeclareMathOperator{\Prob}{\mathbb{P}}
\DeclarePairedDelimiterXPP{\prob}[1]
  {\Prob}{\lparen}{\rparen}{}
  {\renewcommand{\given}{\nonscript\;\delimsize\vert\nonscript\;\mathopen{}}
  \ifblank{#1}{\,\cdot\,}{#1}}
\DeclareMathOperator{\Expec}{\mathbb{E}}
\DeclarePairedDelimiterXPP{\expec}[1]
  {\Expec}{\lparen}{\rparen}{}
  {\renewcommand{\given}{\nonscript\;\delimsize\vert\nonscript\;\mathopen{}}
  \ifblank{#1}{\,\cdot\,}{#1}}
\DeclareMathOperator{\Var}{Var}
\DeclarePairedDelimiterXPP{\var}[1]
  {\Var}{\lparen}{\rparen}{}
  {\renewcommand{\given}{\nonscript\;\delimsize\vert\nonscript\;\mathopen{}}
  \ifblank{#1}{\,\cdot\,}{#1}}
\DeclareMathOperator{\Cov}{Cov}
\DeclarePairedDelimiterXPP{\cov}[2]
  {\Cov}{\lparen}{\rparen}{}{#1,#2}
\newcommand{\FF}{\mathbb{F}}
\newcommand{\NN}{\mathbb{N}}
\newcommand{\cA}{\mathcal{A}}
\newcommand{\cB}{\mathcal{B}}
\newcommand{\cF}{\mathcal{F}}
\newcommand{\cI}{\mathcal{I}}
\newcommand{\rec}{{\operatorname{rec}}}
\newcommand{\can}{{\operatorname{can}}}
\begin{document}

\title[The sandglass conjecture beyond cancellative pairs]
{The sandglass conjecture beyond cancellative pairs}

\author{Adva Mond \and Victor Souza \and Leo Versteegen}

\address{Department of Mathematics, King’s College London, Strand, London, WC2R 2LS, United Kingdom}
\email{adva.mond@kcl.ac.uk}

\address{Department of Mathematics, Cornell University, Ithaca NY 14853, USA; and Sidney-Sussex College, Cambridge, CB2 3HU, United Kingdom}
\email{vss39@cornell.edu, vss28@cam.ac.uk}

\address{Department of Mathematics, London School of Economics, London WC2A 2AE, United Kingdom}
\email{lversteegen.math@gmail.com}

\begin{abstract}
The sandglass conjecture, posed by Simonyi, states that if a pair $(\mathcal{A}, \mathcal{B})$ of families of subsets of $[n]$ is recovering then $|\mathcal{A}| |\mathcal{B}| \leq 2^n$.
We improve the best known upper bound to $|\mathcal{A}| |\mathcal{B}| \leq 2.2543^n$.
To do this we overcome a significant barrier by exponentially separating the upper bounds on recovering pairs from cancellative pairs, a related notion.
\end{abstract}

\maketitle

%---------------------------------------------------------------%

\section{Introduction}
In this paper we study the notion of recovering pairs, introduced by Ahlswede and Simonyi~\cite{Ahlswede1994-st}.
Denote by $2^X$ the family of all subsets of a set $X$.
Given two families $\cA, \cB \subseteq 2^X$, we say that $(\cA, \cB)$ is a \emph{recovering pair} over $X$ if for all $A, A' \in \cA$ and all $B, B' \in \cB$,
\begin{align}
\label{eq:left-recovering}
    A \setminus B &= A' \setminus B'  \implies A = A' \\
\label{eq:right-recovering}
    B \setminus A &= B' \setminus A' \implies B = B'.
\end{align}

In 1989, Simonyi made the following seminal conjecture about the maximal product size of recovering pairs, called the ``sandglass conjecture''. 
This conjecture\footnote{More precisely, they use ``sandglass conjecture'' to refer to a generalisation where the lattice $2^{[n]}$ is replaced by the product of finite chains. Still, Simonyi's original conjecture is generally referred to as the ``sandglass conjecture''.} was presented in print by Ahlswede and Simonyi~\cite{Ahlswede1994-st}.
%--------------------------------%
\begin{conjecture}
\label{conj:sandglass}
Any recovering pair $(\cA,\cB)$ over $[n]$ satisfies $\card{\cA} \card{\cB} \leq 2^n$.
\end{conjecture}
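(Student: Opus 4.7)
The plan begins with a direct encoding. Define $\Phi \colon \cA \times \cB \to 2^{[n]} \times 2^{[n]}$ by $\Phi(A, B) = (A \setminus B, B \setminus A)$. Conditions~\eqref{eq:left-recovering} and~\eqref{eq:right-recovering} together say exactly that $\Phi$ is injective: if $\Phi(A, B) = \Phi(A', B')$, then $A \setminus B = A' \setminus B'$ forces $A = A'$, and $B \setminus A = B' \setminus A'$ forces $B = B'$. Since the image lies in the set of ordered pairs of disjoint subsets of $[n]$, which is in bijection with the words in $\{X, Y, N\}^n$, one immediately obtains the trivial bound $\card{\cA}\card{\cB} \leq 3^n$. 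The task is to improve the base from $3$ to $2$.

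My first attempt would be the entropy method. Take $(A, B)$ uniform on $\cA \times \cB$, so that $H(A, B) = \log(\card{\cA}\card{\cB})$, and associate to each $i \in [n]$ a label $W_i \in \{X, Y, Z, N\}$ recording whether $i$ lies in $A \setminus B$, $B \setminus A$, $A \cap B$, or neither. The recovering conditions translate to $H(A \mid \{i : W_i = X\}) = 0$ and $H(B \mid \{i : W_i = Y\}) = 0$. One would like to deduce $H(W_1, \ldots, W_n) \leq n \log 2$; a natural route is Shearer-type averaging over random permutations of $[n]$, combined with a single-coordinate analysis exploiting how the determinism of $A$ from $\{i : W_i = X\}$ (and the symmetric statement for $B$) constrains the conditional distribution of each $W_i$ given its predecessors.

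A parallel route is via compression: replace $\cA$ and $\cB$ by shifted families of the same size in which membership is biased towards an initial segment of $[n]$, and verify that the recovering property is preserved under the shift. This closure property is delicate, since the recovering hypothesis couples the two families simultaneously, and a naive shift applied to $\cA$ alone can destroy~\eqref{eq:right-recovering}. If one succeeds in reducing to down-closed $\cA^*, \cB^*$, one could attempt an explicit injection $\cA^* \times \cB^* \to 2^{[n]}$, for instance by mapping $(A, B)$ to the symmetric difference $A \triangle B$ decorated with an extra bit determined by a canonical rule depending on $A \cap B$.

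The principal obstacle is the gap highlighted in the abstract: both the injection $\Phi$ and any naive single-coordinate entropy bound use only a cancellative-type hypothesis (in which the primed set on the opposite side is forced to coincide), and cancellative pairs are known to achieve products strictly larger than $2^n$. Reaching the conjectured constant of $2$ must therefore crucially exploit the cross-identifications $A \setminus B = A' \setminus B'$ with $B \neq B'$, and this is precisely the coupling that resists local, single-coordinate arguments. I would expect the decisive step to require either a global shift that synchronises the two families, or a new structural invariant controlling how many $(A, B)$-pairs can project under $\Phi$ to a neighbourhood in the disjoint-pair lattice; and the paper's stated bound of $2.2543^n$ rather than $2^n$ signals that this last step is exactly where the fundamental difficulty lies.
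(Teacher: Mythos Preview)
The statement you were asked to prove is \Cref{conj:sandglass}, which is an \emph{open conjecture}; the paper does not prove it, and indeed its main contribution (\Cref{thm:recovering}) is the weaker bound $\card{\cA}\card{\cB}\leq 2.2543^n$. So there is no ``paper's own proof'' to compare against, and your proposal is not a proof either --- it is (appropriately) a survey of plausible attack lines together with an honest accounting of why each one stalls.

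Your discussion is accurate as far as it goes. The injection $\Phi(A,B)=(A\setminus B, B\setminus A)$ into disjoint pairs does give $3^n$, and you correctly diagnose the central obstruction: the cancellative relaxation already admits pairs with product at least $2.25^{n-o(n)}$ (Tolhuizen), so any argument that only uses cancellativity cannot reach $2^n$. Your entropy sketch and compression sketch both fall into this trap --- the Shearer/single-coordinate heuristic you describe does not distinguish recovering from cancellative, and the shift operation you propose is not known to preserve the recovering property (this is exactly the ``delicate'' point you flag). The paper's actual advance is orthogonal to both of your sketches: it introduces \emph{filtered pairs} (\Cref{sec:restrictions}) to exploit the genuinely recovering identity \eqref{eq:recppty}, and uses this to separate $\mu_\rec$ from $\mu_\can$, but still lands well above $2$.

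In short: there is no gap to name in your write-up because you never claim to have a proof; you correctly identify that the conjecture remains open and that the cancellative barrier is the essential difficulty.
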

%--------------------------------%

If \Cref{conj:sandglass} holds, then it is sharp.
To see this, fix $S \subseteq [n]$, take $\cA$ to be the family of all subsets of $S$, and take $\cB$ to be the family of all subsets of $[n] \setminus S$.
Then $(\cA, \cB)$ is a recovering pair satisfying $\card{\cA} \card{\cB} = 2^{\card{S}} 2^{n - \card{S}} = 2^n$.

In this paper, we improve the best known upper bound for~\Cref{conj:sandglass}.
%--------------------------------%
\begin{theorem}
\label{thm:recovering}
If $(\cA, \cB)$ is a recovering pair over $[n]$, then
    \begin{align*}
        \card{\cA} \card{\cB} \leq 2.2543^n.
    \end{align*}
\end{theorem}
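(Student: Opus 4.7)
The plan is to combine an injective encoding with structural size constraints derived from the recovering property.

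As a starting point, conditions~\eqref{eq:left-recovering} and~\eqref{eq:right-recovering} imply that the map $\psi\colon \cA \times \cB \to 2^{[n]}\times 2^{[n]}$ defined by $\psi(A,B) = (A\setminus B,\, B\setminus A)$ is injective: if $\psi(A,B) = \psi(A',B')$, then~\eqref{eq:left-recovering} yields $A = A'$ and~\eqref{eq:right-recovering} yields $B = B'$. Since the image is contained in the set of pairs of disjoint subsets of $[n]$, this already gives $\card{\cA}\card{\cB} \leq 3^n$. The task is to show that the image is exponentially smaller.

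The key structural input is a uniform size bound. Applying~\eqref{eq:right-recovering} with $A' = A$ shows that for every fixed $A \in \cA$, the map $B \mapsto B \setminus A$ is injective on $\cB$, so $\card{\cB} \leq 2^{n - \card{A}}$ and hence $\card{A} \leq n - \log_2 \card{\cB}$. Symmetrically, $\card{B} \leq n - \log_2 \card{\cA}$ for every $B \in \cB$. Writing $\alpha n = \log_2\card{\cA}$ and $\beta n = \log_2 \card{\cB}$, every $A \in \cA$ satisfies $\card{A} \leq (1-\beta)n$ and every $B \in \cB$ satisfies $\card{B} \leq (1-\alpha)n$. In particular, the image of $\psi$ lies inside the pairs of disjoint subsets $(X,Y)$ with $\card{X} \leq (1-\beta)n$ and $\card{Y} \leq (1-\alpha)n$.

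To extract the exponent $\log_2(2.2543) \approx 1.173$ I would combine these two ingredients via a refined decomposition: writing $\card{\cA}\card{\cB} = \sum_Z N_Z$, where $N_Z$ is the number of pairs $(A,B) \in \cA \times \cB$ with $A \cap B = Z$, each $N_Z$ equals the number of \emph{disjoint} pairs $(X, Y)$ with $X \cup Z \in \cA$, $Y \cup Z \in \cB$ and $X, Y \subseteq [n] \setminus Z$. Setting $\cA_Z = \set{X \subseteq [n]\setminus Z \st X \cup Z \in \cA}$ and $\cB_Z$ analogously, a short computation shows that $(\cA_Z, \cB_Z)$ is itself a recovering pair on the ground set $[n] \setminus Z$, so the structural size bound applies again and constrains $N_Z$ in terms of $\card{Z}$, $\card{\cA_Z}$ and $\card{\cB_Z}$. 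Optimising the resulting system of inequalities over the distribution of $\card{Z}$ should then pin down the exponent. I expect the main obstacle to be precisely this optimisation: a naive combination of the encoding with the size bound bottoms out on the symmetric diagonal around $(2\sqrt{2})^n \approx 2.83^n$, and bridging the gap to $2.2543^n$ likely requires a further ingredient, such as a random partition of $[n]$ to which the structural lemma can be iterated, or a convex programme that genuinely exploits the asymmetry between conditions~\eqref{eq:left-recovering} and~\eqref{eq:right-recovering}.
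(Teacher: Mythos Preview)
Your proposal is not a complete proof, and you say so yourself: the injectivity of $(A,B)\mapsto(A\setminus B,B\setminus A)$, the size bounds $\card{A}\le n-\log_2\card{\cB}$, and the decomposition by $Z=A\cap B$ together only reach about $(2\sqrt 2)^n$, and the closing suggestions (``random partition'', ``convex programme'') are too vague to constitute an argument. The gap is structural: the constant $2.2543$ is not produced by any self-contained analysis of recovering pairs, but by importing the known bound $\mu_\can\le 2.2682$ on \emph{cancellative} pairs.

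The mechanism for this import, which your sketch has no analogue of, is a \emph{filtered pair}. For $A\in\cA$, $B\in\cB$, $P\subseteq A\setminus B$ one forms
\[
\cA_{A,A\setminus B,P}=\set{A'\setminus A : A'\in\cA,\ A'\cap(A\setminus B)=P},\qquad
\cB_{A,A\setminus B}=\set{B'\setminus A : B'\in\cB,\ A\setminus B'=A\setminus B}
\]
on the ground set $X\setminus A$; this pair is cancellative (not recovering in general), so its product is at most $\mu_\can^{\,n-k}$ when $\card{A}=k$. An entropy argument (subadditivity applied to the indicator of $A'\cap(A\setminus B)$ and to the random set $A\setminus B$ for uniform $A,B$) then gives a dichotomy: either some filtered pair has product at least $\theta^{-k}\card{\cA}\card{\cB}$ with $\theta=2.222$, whence $\card{\cA}\card{\cB}\le\theta^k\mu_\can^{\,n-k}$, or else $\card{\cA}\card{\cB}\le 2.2499^n$ directly. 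After a reduction to $k$-uniform pairs and an induction that handles small $k$ by the simple restriction of \Cref{obs:simple-restriction}, this yields $\card{\cA}\card{\cB}\le\max\set{2.2499,\ \theta^{\alpha}\mu_\can^{1-\alpha}}^n$ with $\alpha=0.27$, and plugging in $\mu_\can\le 2.2682$ gives $2.2543^n$. Your $Z$-decomposition stays within recovering pairs and so cannot access $\mu_\can$; that is why it plateaus well above the target.
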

%--------------------------------%
This result is a corollary of our main result, \Cref{thm:main-sandglass}, where we overcome a significant obstacle in the study of the sandglass conjecture by showing an exponential separation between recovering pairs and cancellative pairs -- a related notion that was introduced by Holzman and K\"{o}rner~\cite{Holzman1995-xb} in their work on Simomyi's conjecture.

Given two families $\cA, \cB \subseteq 2^X$, we say that $(\cA, \cB)$ is a \emph{cancellative pair} over $X$ if for all $A, A' \in \cA$ and all $B, B' \in \cB$,
\begin{align}
\label{eq:left-cancellation}
    A \setminus B &= A' \setminus B \implies A = A' \\
\label{eq:right-cancellation}
    B \setminus A &= B' \setminus A \implies B = B'.
\end{align}
Note that any recovering pair is also a cancellative pair, and therefore any upper bound on cancellative pairs implies an upper bound for Simonyi's conjecture.
Holzman and K\"{o}rner~\cite{Holzman1995-xb} proved that for any cancellative pair $(\cA, \cB)$ over a ground set of size $n$, satisfies $|\cA||\cB| \le 2.3264^n$, implying the same upper bound for recovering pairs.

Following~\cite{Holzman1995-xb},
Soltész~\cite{Soltesz2018-sf} improved the upper bound to be $2.284^n$ for recovering pairs only.
More recently B. Janzer~\cite{Janzer2018-pb} improved the upper bound for cancellative pairs to $2.2682^n$ and hence also for recovering pairs, which was later improved by Nair and Yazdanpanah~\cite{nair2020and} to $2.2663^n$ for recovering pairs only, using techniques from coding theory.

In~\cite{Holzman1995-xb} the authors also observed that there exist\footnote{This is done by taking a product of the pair $(\cA, \cB)$ with itself, over the ground set $[3]$, where $\cA=\cB=\set{\set{1},\set{2},\set{3}}$.} cancellative pairs $(\cA,\cB)$ satisfying $\card{\cA} \card{\cB} \approx 2.08^n$ for infinitely many values of $n$, thus showing that the study of cancellative pairs is not enough to prove~\Cref{conj:sandglass}.
Later, Tolhuizen~\cite{Tolhuizen2000-ww} improved this lower bound by constructing cancellative pairs $(\cA, \cB)$ over $[n]$ with $\card{\cA} \card{\cB} = 2.25^{n-o(n)}$, for arbitrarily large $n$. It is an enticing open problem to determine if this construction is optimal for cancellative pairs.

Define the optimal rate\footnote{Using supermultiplicativity, one can show that both limits exist  (see~\Cref{sec:prelims}).} for recovering pairs as
\begin{align}
\label{eq:murec}
    \mu_\rec \defined \lim_{n \to \infty} \max \set{ \p[\big]{\card{\cA}\card{\cB} }^{1/n} \st (\cA,\cB) \text{ is a recovering pair over } [n]},
\end{align}
and, analogously for cancellative pairs, define
\begin{align}
\label{eq:mucan}
    \mu_\can \defined \lim_{n \to \infty} \max \set{ \p[\big]{\card{\cA}\card{\cB} }^{1/n} \st (\cA,\cB) \text{ is a cancellative pair over } [n] }.
\end{align}

It is clear that $2 \leq \mu_\rec \leq \mu_\can$.
The current state of art, due the works of Tolhuizen~\cite{Tolhuizen2000-ww}, B. Janzer~\cite{Janzer2018-pb}, and Nair and Yazdanpanah~\cite{nair2020and}, is that $2.25 \leq \mu_\can \leq 2.2682$, and $2 \le \mu_\rec \le 2.2663$.

The main result of this paper shows that $\mu_\rec$ and $\mu_\can$ are not equal, exponentially separating the maximal product size of recovering pairs from the one of cancellative pairs.

%--------------------------------%
\begin{theorem}
\label{thm:main-sandglass}
\label{thm:RecUpr}
Let $(\cA, \cB)$ be a recovering pair over a ground set $X$ of size $n$, and let $\alpha = 0.27$ and $\theta = 2.222$.
Then we have
\begin{equation}
\label{eq:ABupr}
    \card{\cA} \card{\cB} \leq \max \set{2.2499, \theta^{\alpha} \cdot \mu_\can^{1-\alpha}}^n.
\end{equation}
In particular, $\mu_\rec < \mu_\can$.
\end{theorem}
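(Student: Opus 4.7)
The strategy is to prove \eqref{eq:ABupr} by combining two arguments that succeed in complementary regimes. The $2.2499^n$ alternative should be obtained via a direct structural bound on recovering pairs, refining the techniques of Nair--Yazdanpanah~\cite{nair2020and}. The interpolation $\theta^{\alpha}\mu_\can^{1-\alpha}$ should come from a decomposition of the ground set that exploits the strict strengthening of \eqref{eq:left-recovering}--\eqref{eq:right-recovering} over \eqref{eq:left-cancellation}--\eqref{eq:right-cancellation}.

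For the interpolation, given a recovering pair $(\cA, \cB)$ on $X$, I would first classify each element $x \in X$ by the joint distribution of $(\ind_{x \in A}, \ind_{x \in B})$ under uniformly random $(A, B) \in \cA \times \cB$. An averaging argument should then extract a subset $S \subseteq X$ of size close to $\alpha n$ consisting of coordinates of a common type for which the recovering condition is strictly more informative than cancellation -- presumably those where $x \in A \cap B$ is frequent, so that knowing the halves $A \setminus B$ and $B \setminus A$ strongly constrains the pair. On the complement $X \setminus S$, after conditioning on the trace on $S$, the residual pair should remain cancellative and thus contribute at most $\mu_\can^{(1-\alpha)n}$ by \eqref{eq:mucan} and supermultiplicativity.

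The main obstacle is a structural lemma showing that the trace of $(\cA, \cB)$ on the extracted set $S$ satisfies $\card{\cA_S}\card{\cB_S} \leq \theta^{\card{S}}$ with $\theta = 2.222$. Its proof must go beyond the cancellative framework, genuinely using the freedom $B \neq B'$ in \eqref{eq:left-recovering} that is absent from \eqref{eq:left-cancellation}. The natural route is to refine the entropy method of Holzman--K\"orner~\cite{Holzman1995-xb}: perform a case analysis on whether each coordinate in $S$ lies in $A \cap B$, $A \setminus B$, $B \setminus A$, or neither, and combine the local contributions via convexity so that the coordinate-wise exponent for the chosen type is at most $\log_2 \theta$.

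Combining the two regimes yields \eqref{eq:ABupr}: whichever of the two arguments applies, the resulting bound is at most the claimed maximum. Finally, $\theta = 2.222 < 2.25 \leq \mu_\can$ by Tolhuizen's construction~\cite{Tolhuizen2000-ww}, and also $2.2499 < 2.25 \leq \mu_\can$, so the whole maximum in \eqref{eq:ABupr} is strictly less than $\mu_\can$. Passing to the limit in \eqref{eq:murec} then gives $\mu_\rec < \mu_\can$, the desired exponential separation.
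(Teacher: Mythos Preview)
Your outline has the right two-term shape, but the mechanism you propose for each term is not what actually works, and the gaps are substantive rather than cosmetic. For the interpolation term $\theta^{\alpha}\mu_\can^{1-\alpha}$, the paper does \emph{not} extract a subset $S$ of size $\alpha n$ by coordinate type and then bound a trace on $S$ by $\theta^{\card{S}}$. Instead it first reduces to $k$-uniform pairs (\Cref{lem:uniform}) and proceeds by induction on $n$. When $k\le \alpha n$ a single-element restriction as in~\cite{Janzer2018-pb} already closes the induction; $\alpha$ enters only as this threshold, not as the size of an extracted set. When $k\ge \alpha n$, the set removed from the ground set is a fixed $A\in\cA$ of size $k$, via the \emph{filtered pair} $(\cA_{A,A\setminus B,P},\cB_{A,A\setminus B})$ of \Cref{sec:restrictions}. \Cref{obs:filter-left-recovering} shows this filtered pair is cancellative on $X\setminus A$, and crucially this relies on the specific choice $C=A$, $S=A\setminus B$; your generic claim that ``after conditioning on the trace on $S$, the residual pair should remain cancellative'' is not obvious for an arbitrary $S$ chosen by coordinate statistics, and I do not see how to rescue it. The factor $\theta^{k}$ does not come from bounding a trace: it comes from a \emph{dichotomy}---either some filtered pair has relative size at least $\theta^{-k}$, in which case $\card{\cA}\card{\cB}\le \theta^{k}\mu_\can^{n-k}\le (\theta^\alpha\mu_\can^{1-\alpha})^n$, or \emph{every} filtered pair is small.

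The $2.2499^n$ bound is precisely the output of that second branch, and it does not come from refining~\cite{nair2020and}. Assuming all filtered pairs are $\theta^{-k}$-small, an entropy argument (\Cref{prop:left-recovering-bound}, symmetrised in \Cref{cor:recovering-bound}) gives $\log_2\card{\cA}\card{\cB}\le \sum_{i} g(a_i,b_i,\theta)$ with an explicit $g$; the point where the recovering property is genuinely used beyond cancellation is the identity $\card{\{(A',B'):A'\setminus B'=A\setminus B\}}=\card{\{B':A\setminus B'=A\setminus B\}}$, which fails for merely cancellative pairs. A numerical check (\Cref{cl:optimisation}) then shows $g(x,y,2.222)\le\log_2 2.2499$ pointwise. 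Your proposed ``structural lemma'' $\card{\cA_S}\card{\cB_S}\le \theta^{\card{S}}$ and the suggestion that a coordinate-wise convexity argument yields exponent $\log_2\theta$ do not correspond to any step that is actually carried out, and I do not believe a direct refinement of~\cite{nair2020and} reaches $2.2499$ without the filtered-pair dichotomy.
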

%--------------------------------%

\Cref{thm:recovering} follows immediately as a corollary from~\Cref{thm:main-sandglass} giving the new upper bound for the maximal product size of recovering pairs.

Given~\Cref{thm:main-sandglass}, any improvement of the upper bound on cancellative pairs to about $2.26^n$ would imply that $\mu_\rec < 2.25$.
Moreover, if we had $\mu_\can = 2.25$, then \Cref{thm:main-sandglass} would imply $\mu_\rec \leq 2.243$.

We remark that in the original paper~\cite{Ahlswede1994-st}, recovering pairs are actually defined in a slightly different way.
Instead of \eqref{eq:left-recovering} and \eqref{eq:right-recovering}, the required conditions are that $A \cup B = A' \cup B' \implies A = A'$ and that $A \cap B = A' \cap B' \implies B = B'$, for all $A, A' \in \cA$ and $B, B' \in \cB$.
It is easily shown that a pair $(\cA, \cB)$ is recovering in this sense if and only if the pair $(\set{A^c \st A \in \cA}, \cB )$ is recovering as defined by \eqref{eq:left-recovering} and \eqref{eq:right-recovering}.
Therefore, the two definitions are interchangeable for the objective of maximizing $\card{\cA} \card{\cB}$.
In this formulation, the sharp examples are given by taking $\cA$ to be all supersets of a fixed set $S \subseteq [n]$, and $\cB$ to be all its subsets, giving rise to the name ``the sandglass conjecture''.

In \cite{Ahlswede1994-st} the authors also discuss a one-sided variant of the sandglass problem, in which one aims to maximize $\card{\cA} \card{\cB}$ over all pairs satisfying \eqref{eq:left-cancellation} but not necessarily \eqref{eq:right-cancellation}.
We call such pairs \emph{left-cancellative}.
They observe an upper bound of $3^n$ and a lower bound of $6^{n/2}$ for this quantity. Using Tolhuizen's construction for large cancellative pairs, we determine the maximum size of a one-sided cancellative pair up to a sub-exponential error.

\begin{theorem}\label{thm:one-sided}
    The maximum of $\card{\cA} \card{\cB}$ over all left-cancellative pairs $(\cA,\cB)$ is $3^{n-o(n)}$.
\end{theorem}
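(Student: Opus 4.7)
The proof will split into the upper bound $\card{\cA}\card{\cB} \le 3^n$ and the lower bound $\card{\cA}\card{\cB} \ge 3^{n - o(n)}$.

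For the upper bound, I will use the classical injective encoding of Ahlswede and Simonyi~\cite{Ahlswede1994-st}: the map $\Phi\from \cA \times \cB \to 2^{[n]} \times 2^{[n]}$ defined by $\Phi(A, B) \defined (A \setminus B, B)$ is injective on $\cA \times \cB$ by~\eqref{eq:left-cancellation}. Its image lies in the set of disjoint pairs $\set{(X, Y) \st X \cap Y = \emptyset}$, of cardinality $3^n$ (each element of $[n]$ is in $X$, in $Y$, or in neither), so $\card{\cA}\card{\cB} \le 3^n$.

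For the lower bound, I will construct left-cancellative pairs of product at least $(3 - \eps)^n$ for each $\eps > 0$, starting from Tolhuizen's~\cite{Tolhuizen2000-ww} cancellative pair $(\cA_0, \cB_0)$ on $[m]$ with $\card{\cA_0}\card{\cB_0} \ge (2.25 - \eps)^m$ and (by balance) $\card{\cA_0}, \card{\cB_0} \ge (1.5 - \eps)^m$. Simply viewing Tolhuizen's pair as left-cancellative would only give rate $2.25$, so the extra factor $(4/3)^n$ needed to reach rate $3$ will come from exploiting the asymmetry of~\eqref{eq:left-cancellation}, which unlike the two-sided \eqref{eq:right-cancellation} restricts only the $\cA$-side. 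Concretely, I will enlarge $\cB_0$ to the maximal left-cancellative partner $\cB^* \defined \set{B \subseteq [m] \st \cA_0|_{B^c} \text{ is injective on } \cA_0}$. By construction $(\cA_0, \cB^*)$ is left-cancellative and $\cB^* \supseteq \cB_0$; the main quantitative goal will then be to show $\card{\cB^*} \ge 2^{m - o(m)}$, giving $\card{\cA_0}\card{\cB^*} \ge 3^{m - o(m)}$ on $[m]$, and hence the claimed bound on $[n]$ by taking direct products over blocks.

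The hardest step will be establishing this density bound $\card{\cB^*} \ge 2^{m - o(m)}$. Since $\cB^*$ is precisely the set of $B \subseteq [m]$ that do not contain any pairwise symmetric difference $A \triangle A'$ (with $A \ne A' \in \cA_0$) as a subset, the naive union bound gives $\card{2^{[m]} \setminus \cB^*} \le \sum_{A \ne A' \in \cA_0} 2^{m - \card{A \triangle A'}}$, which is too weak when $\cA_0$ has pairs at small Hamming distance. Overcoming this will require either a finer structural analysis of Tolhuizen's construction (controlling the distribution of pairwise symmetric differences in $\cA_0$, rather than using only the total size $\card{\cA_0}\card{\cB_0}$), or combining Tolhuizen's pair with additional combinatorial input such as a coding-theoretic construction matched to it. Producing a rate-$3$ left-cancellative pair from Tolhuizen's rate-$2.25$ cancellative one is the crux of the argument.
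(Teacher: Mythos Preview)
Your upper bound is correct and is essentially the paper's argument in a slightly different packaging.

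The lower bound plan, however, has a genuine obstruction: your stated target $\card{\cB^*} \ge 2^{m - o(m)}$ is provably unattainable given the $\cA_0$ you chose. For any $B \in \cB^*$, the map $A \mapsto A \cap B^c$ is injective on $\cA_0$, so $\card{\cA_0} \le 2^{m - \card{B}}$, i.e.\ $\card{B} \le m - \log_2 \card{\cA_0}$. With $\card{\cA_0} \ge (1.5 - \eps)^m$ this forces $\card{B} \le (\log_2(4/3) + o(1))\,m \approx 0.415\,m$ for every $B \in \cB^*$, and hence
\[
\card{\cB^*} \le \sum_{j \le 0.415\,m} \binom{m}{j} \le 2^{h(0.415)\,m + o(m)} \le 2^{0.981\,m}.
\]
This is a linear-in-$m$ shortfall in the exponent, not $o(m)$, so $\card{\cA_0}\card{\cB^*} \le (1.5 \cdot 2^{0.981})^{m+o(m)} \le 2.96^{m+o(m)}$, bounded away from $3^m$. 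No amount of ``finer structural analysis'' of $\cA_0$ can repair this; the problem is the choice of $\card{\cA_0} \approx 1.5^m$.

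What the paper does instead is to run Tolhuizen's linear-code construction directly at a different parameter, rather than to take his balanced cancellative pair and augment one side. Take a random $M \in \FF_2^{n \times (n-k)}$, let $\cA_M$ be its column space, and let $\cB_M$ be the complements of the \emph{information sets} (those $S$ of size $n-k$ on which $M$ restricts to an invertible matrix). The pair $(\cA_M, \cB_M)$ is left-cancellative for every $k$ --- invertibility on $[n] \setminus B$ is exactly what recovers the preimage --- and Tolhuizen's lemma gives $\card{\cB_M} \ge c\,\binom{n}{k}$ for some $M$. Now choose $k = \lfloor n/3 \rfloor$ rather than $n/2$: then $\card{\cA_M}\card{\cB_M} = \Omega\bigl(2^{2n/3}\binom{n}{n/3}\bigr)$, which is the largest term in the expansion $3^n = \sum_k 2^{n-k}\binom{n}{k}$ and hence equals $3^{n - o(n)}$. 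The point you were missing is that the rate of $\cA$ must itself be rebalanced (to $2^{2n/3}$, not $(3/2)^n$), and once you do that the left-cancellative pair comes out of Tolhuizen's machinery with no augmentation step at all.
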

%---------------------------------------------------------------%
The organisation of the remainder of the paper is as follows.
In~\Cref{sec:prelims}, we present preliminaries results we will use in the proof of~\Cref{thm:main-sandglass}.
In~\Cref{sec:restrictions}, we define an operation we perform to a recovering pair.
In~\Cref{sec:mainlem}, we prove the main lemma we need for the proof of~\Cref{thm:main-sandglass}, and in~\Cref{sec:beyond}, we prove~\Cref{thm:main-sandglass}.
Lastly, in~\Cref{sec:onesided}, we prove~\Cref{thm:one-sided}.
%---------------------------------------------------------------%

\section{Preliminaries}
\label{sec:prelims}

Given two recovering (cancellative) pairs $(\cA, \cB)$ and $(\cA', \cB')$ over disjoint ground sets $X$ and $X'$, respectively, we define their product to be the pair $(\cA'', \cB'')$ given by
\begin{align*}
    \cA'' \defined \set{A \cup A' \st A \in \cA,\; A' \in \cA'}, &&
    \cB'' \defined \set{B \cup B' \st B \in \cB, \; B' \in \cB'}.
\end{align*}
Note that the pair $(\cA'', \cB'')$ is also recovering (cancellative) over the ground set $X \cup X'$.
Moreover, we have $|\cA''||\cB''| = |\cA||\cB||\cA'||\cB'|$, implying that in~\eqref{eq:murec} and in~\eqref{eq:mucan} the limits exist.

We say that a pair of sets $(\cA,\cB)$ over a ground set of $n$ elements is \emph{$k$-uniform} if for every $A \in \cA$ and every $B \in \cB$, we have $\card{A} = \card{B} = k$.
We rely on the the following lemma (originally from~\cite{Soltesz2018-sf} and then reformulated in~\cite{Janzer2018-pb}), which shows, using a product argument, that in order to prove an upper bound for recovering (or cancellative) pairs in general, it is enough to consider $k$-uniform pairs only.
%--------------------------------%
\begin{lemma}[Lemma 2 in~\cite{Janzer2018-pb}]
\label{lem:uniform}
Let $\mu>0$, and suppose that for all $n, k \in \NN$ with $k \leq n$, every $k$-uniform recovering pair over $n$-element ground set satisfies $\card{\cA} \card{\cB} \leq \mu^n$ for some constant $\mu$.
Then the same inequality holds for any (not necessarily uniform) recovering pair.
\end{lemma}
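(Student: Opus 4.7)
The plan is to reduce the general case to the uniform one by a tensor power argument that exploits a swap symmetry of the recovering property. The main difficulty to overcome is that, for a general recovering pair, the naive idea of restricting $\cA$ and $\cB$ to their size classes $\cA_a$ and $\cB_b$ produces a pair with typically $a \neq b$; turning this into a uniform pair by padding the smaller side with fresh elements would enlarge the ground set by as much as $n$, costing an extra factor of $\mu^n$ and ruining the bound. The swap trick below forces the two families to have matching size distributions, letting us avoid this loss entirely.

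First I would observe that whenever $(\cA, \cB)$ is recovering, so is the swapped pair $(\cB, \cA)$, since conditions \eqref{eq:left-recovering} and \eqref{eq:right-recovering} simply exchange roles. Forming the product of $(\cA, \cB)$ over $[n]$ with $(\cB, \cA)$ over a disjoint copy of $[n]$ (in the sense defined at the start of \Cref{sec:prelims}) then produces a recovering pair $(\widetilde{\cA}, \widetilde{\cB})$ over a ground set of size $2n$ with $\card{\widetilde{\cA}} = \card{\widetilde{\cB}} = \card{\cA}\card{\cB}$. By construction, every element of $\widetilde{\cA}$ has cardinality $\card{A} + \card{B}$ for a unique pair $(A, B) \in \cA \times \cB$, and every element of $\widetilde{\cB}$ has cardinality $\card{B} + \card{A}$ for a unique pair $(B, A) \in \cB \times \cA$, so $\widetilde{\cA}$ and $\widetilde{\cB}$ have identical distributions of cardinalities.

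Next I would take the $m$-fold product of $(\widetilde{\cA}, \widetilde{\cB})$ with itself to obtain a recovering pair over $2nm$ elements whose two sides each have cardinality $(\card{\cA}\card{\cB})^m$ and still share a common distribution of cardinalities. Since cardinalities lie in $\set{0, 1, \ldots, 2nm}$, by pigeonhole there is a single value $k^*$ such that restricting each side to its size-$k^*$ members leaves at least $(\card{\cA}\card{\cB})^m / (2nm + 1)$ sets on each side; this restricted pair is $k^*$-uniform, recovering, over a ground set of size $2nm$, so the uniform hypothesis bounds its product of cardinalities by $\mu^{2nm}$. Combining yields $(\card{\cA}\card{\cB})^{2m} \leq \mu^{2nm}(2nm+1)^2$, hence $\card{\cA}\card{\cB} \leq \mu^n (2nm+1)^{1/m}$. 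Letting $m \to \infty$ sends the polynomial factor to $1$ and gives the desired $\card{\cA}\card{\cB} \leq \mu^n$. The only step requiring care is verifying that the pigeonhole can pick one $k^*$ common to both sides, which is exactly what the matched size distribution from the swap product delivers; everything else is a routine tensor power limit with no hidden loss.
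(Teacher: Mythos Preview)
The paper does not supply its own proof of this lemma; it merely cites it as Lemma~2 in Janzer (originating with Solt\'esz) and remarks that it follows from ``a product argument''. Your proposal is a correct product argument of exactly this flavour, and every step checks out: the swap $(\cA,\cB)\mapsto(\cB,\cA)$ preserves the recovering property, the product of recovering pairs is recovering (as recorded at the start of \Cref{sec:prelims}), the swap product forces the two sides to have identical size multisets so that a single $k^\ast$ can be picked by pigeonhole, and the tensor-power limit kills the polynomial factor. The swap device is precisely the idea used in the cited references, so your argument is essentially the standard one.
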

%--------------------------------%

\subsection{Entropy of discrete random variables}
\label{sec:entropy}

We will make use of some basic facts about entropy, which we recall below.
See \cites{Cover2005-xs,Csiszar2011-lr} for standard references in information theory.
For a random variable $Z$ taking finitely many values $z_1, \dotsc, z_m$, for some $m \in \NN$, with probabilities $p(z_1), \dotsc, p(z_m)$, respectively, its entropy $H(Z)$ is defined as $H(Z) \defined \expec{-\log_2 p(Z)}$, or equivalently,
\begin{equation*}
  H(Z) \defined -\sum_{i \in [m]} p(z_i) \log_2 p(z_i).  
\end{equation*}

\begin{observation}
\label{obs:entropy}
    If $Z$ is a discrete random variable taking finitely many values, for some $m \in \NN$, with probabilities $p(z_1) \le \ldots \le p(z_m)$.
    Then we have $p(z_1) \ge 2^{-H(Z)}$.
\end{observation}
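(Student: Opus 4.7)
The plan is to start from the definition $H(Z) = \sum_{i \in [m]} p(z_i)\bigl(-\log_2 p(z_i)\bigr)$, which expresses $H(Z)$ as the $p$-weighted average of the surprise values $-\log_2 p(z_i)$. The standard template for bounds of the form $p \ge 2^{-H}$ is to substitute a single monotone bound on the surprise values into this sum and simplify using $\sum_i p(z_i) = 1$.

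Under the stated ordering $p(z_1) \le \cdots \le p(z_m)$, the immediate bound available is $p(z_i) \ge p(z_1)$ for every $i$, giving $-\log_2 p(z_i) \le -\log_2 p(z_1)$. Multiplying by $p(z_i)$ and summing yields $H(Z) \le -\log_2 p(z_1)$, i.e., $p(z_1) \le 2^{-H(Z)}$. This is the opposite of the inequality asserted, so the template does not directly produce the desired conclusion.

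The main obstacle is exactly this mismatch: since $-\log_2 p(z_1)$ is the maximum of the terms $-\log_2 p(z_i)$ appearing in the weighted average, any averaging argument can only bound $H(Z)$ from above by $-\log_2 p(z_1)$, never from below. Indeed, the literal claim $p(z_1) \ge 2^{-H(Z)}$ fails whenever $Z$ is non-uniform: for the Bernoulli distribution with parameter $0.99$, one has $p(z_1) = 0.01$ while $2^{-H(Z)} \approx 0.919$. Since no monotone-substitution argument can recover the stated direction, my proposal is to interpret the observation with $p(z_m)$ (the maximum probability) in place of $p(z_1)$ on the right-hand side, or equivalently to read the ordering as decreasing so that $p(z_1)$ denotes the mode. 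Under either correction the proof is a one-liner: applying the same template with the substitution $p(z_i) \le p(z_m)$ gives $H(Z) \ge -\log_2 p(z_m)$ and hence $p(z_m) \ge 2^{-H(Z)}$, which is the classical bound on the mode of a discrete distribution and plainly matches the intended use of the observation later in the paper.
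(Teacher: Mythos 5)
Your diagnosis is correct, and it is worth stating plainly: as literally written, with the ordering $p(z_1)\le\cdots\le p(z_m)$, the observation is false, and your Bernoulli$(0.99)$ counterexample nails it. The ordering in the statement is a typo and should be reversed (or, equivalently, $p(z_1)$ should be replaced by $p(z_m)$): the quantity being bounded is the \emph{mode}, not the minimum probability. Two pieces of internal evidence confirm this. First, the paper's own proof invokes the inequality
\[
-H(Z) \;=\; \sum_{i=1}^{m} p(z_i)\log_2 p(z_i) \;\le\; \log_2 p(z_1)\sum_{i=1}^m p(z_i) \;=\; \log_2 p(z_1),
\]
which requires $\log_2 p(z_i)\le \log_2 p(z_1)$ for every $i$, i.e.\ $p(z_1)=\max_i p(z_i)$; this is only consistent with the decreasing ordering. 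Second, in the sole application (Proposition~\ref{prop:AClarge}) the observation is applied to the set $P\subseteq S$ for which $\prob{A\cap S=P}$ is \emph{maximal}, so the intended meaning is unambiguously the mode bound.

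Once the statement is read that way, your one-line direct proof is essentially the same argument as the paper's: you write $H(Z)=\sum_i p(z_i)\bigl(-\log_2 p(z_i)\bigr)\ge -\log_2 p_{\max}\sum_i p(z_i)=-\log_2 p_{\max}$ and exponentiate, whereas the paper packages the identical inequality as a proof by contradiction. The direct version is a bit cleaner, but there is no substantive difference in method. So: you correctly identified a genuine typo that makes the statement false as written, proposed the right fix, and proved the corrected statement by (effectively) the paper's own argument.
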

\begin{proof}
    If $p(z_1) < 2^{-H(Z)}$, this yields
    \[\log_2 p(z_1) < -H(Z) = \sum_{i=1}^{m} p(z_i) \log_2 p(z_i) \leq \log_2 p(z_1) \sum_{i=1}^{m} p(z_i) = \log_2 p(z_1),\]
    which is a contradiction.
\end{proof}

We also recall that the entropy function is \emph{subadditive} in the following sense.
If $Z_1$ and $Z_2$ are two discrete random variables, then their joint entropy (that is, the entropy of the joint random variable $(Z_1, Z_2)$) satisfy
\begin{equation*}
    H(Z_1, Z_2) \leq H(Z_1) + H(Z_2),
\end{equation*}
with equality only when $Z_1$ and $Z_2$ are independent.

If $Z$ attains only two values, one with probability $p \in (0,1)$ and the other with probability $1 - p$, then its entropy is given by the \emph{binary entropy function}
\begin{equation}
\label{eq:hp}
    h(p) \defined -p\log_2 p - (1 - p)\log_2(1 - p).
\end{equation}
We extend $h(p)$ to $p \in [0,1]$ continuously, setting $0 \log_2 0 \defined 0$.

%---------------------------------------------------------------%

\section{Filtered pairs}
\label{sec:restrictions}

In this section, we discuss some operations one can perform on a recovering or cancellative pair $(\cA, \cB)$ on $X$ to obtain new pairs $(\cA', \cB')$ on a subset of $X$.
This will allow us to perform inductive arguments with these families.

The first such operation is a simple \emph{restriction}, already considered in the work of Holzman and Körner~\cite{Holzman1995-xb}.
If $\cF$ is a family of subsets of $X$ and $i \in X$, define
\begin{align*}
    \cF_i \defined \set[\big]{F \in \cF \st i \in F }, \qquad
    \cF'_i \defined \set[\big]{F \in \cF \st i \notin F }.
\end{align*}
It is then clear that the following holds.

%--------------------------------%
\begin{observation}
\label{obs:simple-restriction}
If $(\cA, \cB)$ is a recovering pair over $X$, then for every $i \in X$:
\begin{enumerate}
    \item $(\cA_i, \cB_i)$ is a recovering pair over $X$;
    \item $(\cA'_i, \cB'_i)$ is a  recovering pair over $X$ and over $X \setminus \set{i}$;
    \item $\p[\big]{ \set{A \setminus \set{i} \st A \in \cA_i}, \set{B \setminus \set{i} \st A \in \cB_i} }$ is a recovering pair over $X \setminus \set{i}$.
\end{enumerate}
Moreover, the same is true if we replace `recovering' by `cancellative'.
\end{observation}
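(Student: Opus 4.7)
The plan is to verify each clause by directly checking the defining conditions \eqref{eq:left-recovering} and \eqref{eq:right-recovering} (and their analogues \eqref{eq:left-cancellation} and \eqref{eq:right-cancellation} for the cancellative case). Since all three claims are closure statements under simple set-theoretic operations, no genuine obstacle is expected: the only step requiring care is making sure that the set differences behave correctly after deleting the element $i$.

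For (i), note that $\cA_i \subseteq \cA$ and $\cB_i \subseteq \cB$. Hence any quadruple $(A, A', B, B') \in \cA_i \times \cA_i \times \cB_i \times \cB_i$ is also a quadruple in $\cA \times \cA \times \cB \times \cB$, so the implications \eqref{eq:left-recovering} and \eqref{eq:right-recovering} are immediately inherited. The same argument handles the first half of (ii), using $\cA'_i \subseteq \cA$ and $\cB'_i \subseteq \cB$. For the second half of (ii), I would observe that since $i \notin A$ and $i \notin B$ for every $A \in \cA'_i$ and $B \in \cB'_i$, these families may be regarded as families of subsets of $X \setminus \{i\}$, and the two conditions defining a recovering pair do not depend on the choice of the ambient ground set.

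The only clause that requires a small calculation is (iii). Let $\widetilde{\cA} = \set{A \setminus \set{i} : A \in \cA_i}$ and $\widetilde{\cB} = \set{B \setminus \set{i} : B \in \cB_i}$. The key identity to record is that whenever $i \in A \cap B$,
\begin{equation*}
    (A \setminus \set{i}) \setminus (B \setminus \set{i}) = A \setminus B,
\end{equation*}
because the left-hand side omits $i$ (as $i \notin A \setminus \set{i}$) and agrees with $A \setminus B$ on every other element. Applying this identity to two pairs $(A, B)$ and $(A', B')$ with $A, A' \in \cA_i$ and $B, B' \in \cB_i$ reduces the recovering condition on $(\widetilde{\cA}, \widetilde{\cB})$ to the one on $(\cA_i, \cB_i)$, which we already know from (i). The conclusion $\widetilde{A} = \widetilde{A}'$ then follows because $A \mapsto A \setminus \set{i}$ is injective on $\cA_i$ (and similarly on $\cB_i$).

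Finally, for the ``cancellative'' variant, I would note that the same three arguments go through \emph{verbatim}, since only \eqref{eq:left-cancellation} and \eqref{eq:right-cancellation} rather than \eqref{eq:left-recovering} and \eqref{eq:right-recovering} are invoked, and these are structurally identical from the standpoint of the verifications above. The only conceptual step worth flagging is the identity displayed above, which is what allows the set-difference operation to commute with deleting $i$ from both sides.
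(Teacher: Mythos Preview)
Your proposal is correct. The paper does not actually supply a proof of this observation at all; it simply writes ``It is then clear that the following holds'' before the statement, so your verification is exactly the kind of routine check the authors are implicitly leaving to the reader, and your key identity $(A\setminus\{i\})\setminus(B\setminus\{i\})=A\setminus B$ for $i\in A\cap B$ is the only nontrivial point.
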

%--------------------------------%
We remark that \Cref{obs:simple-restriction} holds also for left-cancellative pairs and for left-recovering pairs (which are defined analogously to left-cancellative pairs).

For the remainder of the paper, we denote the relative size of each restriction for $i\in X$ by
\begin{align}
\label{eq:aibi}
    a_i \defined \frac{\card{\cA_i}}{\card{\cA}}, \quad
    a'_i \defined \frac{\card{\cA'_i}}{\card{\cA}}, \quad
    b_i \defined \frac{\card{\cB_i}}{\card{\cB}}, \quad
    b'_i \defined \frac{\card{\cB'_i}}{\card{\cB}}.
\end{align}
It follows then that for any $i \in X$, we have $a_i + a'_i = 1$ and $b_i + b'_i = 1$.
Furthermore, note that if the pair $(\cA, \cB)$ is $k$-uniform, for some integer $k$, then $\sum_{i \in X} a_1 = \sum_{i \in X} b_i = k$.
The proofs in~\cites{Holzman1995-xb,Janzer2018-pb} rely on an inductive argument based on this type of restriction and its properties, as given in~\Cref{obs:simple-restriction}.
In order to obtain a separation between the bounds $\mu_\can$ and $\mu_\rec$ however, we need to a different type of restriction, which exploits the recovering property.

We introduce a new type of restriction in order to differentiate recovering and cancellative pairs.
Let $(\cA, \cB)$ be a pair of families over $X$.
Then given $P \subseteq S \subseteq C \subset X$, the \emph{filtered} pair $(\cA_{C,S,P}, \cB_{C,S})$ of the pair $(\cA, \cB)$ is defined such that
\begin{align*}
    \cA_{C,S,P} &\defined \set{A \setminus C \st A \in \cA ,\; A \cap S = P}, \\
    \cB_{C,S} &\defined \set{B \setminus C \st B \in \cB ,\; C\setminus B = S}.
\end{align*}

We use three important statements regarding filtered pairs as described above.
Later we will only use filtered pairs where $C = A$ and $S = A \setminus B$, for some $A \in \cA$ and $B \in \cB$, but the properties described in the remainder of this section hold in larger generality.

%--------------------------------%
\begin{observation}
\label{obs:filter-size}
Let $(\cA, \cB)$ be a cancellative pair over a set $X$.
Then for any sets $P \subseteq S \subseteq C \subset X$ such that there is $B \in \cB$ with $S = C \setminus B$, the filtered pair $(\cA_{C,S,P}, \cB_{C,S})$ satisfies
\begin{align*}
    \card{\cA_{C,S,P}}
    &= \card[\big]{\set{A \in \cA \st A \cap S = P }}, \\
    \card{\cB_{C,S}}
    &= \card[\big]{\set{B \in \cB \st C \setminus B = S}}.
\end{align*}
\end{observation}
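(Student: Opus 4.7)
The plan is to interpret both identities as the statement that the natural maps $A \mapsto A \setminus C$ and $B \mapsto B \setminus C$ are injective on the sets $\set{A \in \cA \st A \cap S = P}$ and $\set{B \in \cB \st C \setminus B = S}$ respectively, since by the very definition of $\cA_{C,S,P}$ and $\cB_{C,S}$ these maps are already surjective onto the filtered families. Thus the task reduces to verifying injectivity in each case.

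For $\cB_{C,S}$, no property of the pair $(\cA,\cB)$ is required. The constraint $C \setminus B = S$ forces $B \cap C = C \setminus S$, a fixed subset of $C$ that depends only on $S$. Consequently, any $B$ in the restricted domain is determined by $B \setminus C$ together with the known set $C \setminus S$, and injectivity is immediate.

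For $\cA_{C,S,P}$, I plan to exploit the hypothesised existence of $B \in \cB$ with $C \setminus B = S$ in order to appeal to the left-cancellative property \eqref{eq:left-cancellation}. The key computation is to decompose $A \setminus B$ according to whether an element lies in $C$ or not, which gives
\begin{equation*}
    A \setminus B = \p[\big]{A \cap (C \setminus B)} \cup \p[\big]{(A \setminus C) \setminus B} = (A \cap S) \cup \p[\big]{(A \setminus C) \setminus B}.
\end{equation*}
Substituting $A \cap S = P$ shows that on the restricted domain this quantity depends on $A$ only through $A \setminus C$. Hence if $A, A' \in \cA$ both satisfy $A \cap S = A' \cap S = P$ and $A \setminus C = A' \setminus C$, then $A \setminus B = A' \setminus B$, and \eqref{eq:left-cancellation} forces $A = A'$.

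I do not anticipate any real obstacle; the whole argument is short set-theoretic bookkeeping. The only conceptual point is to notice that the side-hypothesis ``$S = C \setminus B$ for some $B \in \cB$'' is imposed precisely so that a cancellative witness $B$ is available on the $\cA$-side, while the $\cB$-side needs no witness at all.
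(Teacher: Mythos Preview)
Your proposal is correct and follows essentially the same approach as the paper: both arguments reduce the first equality to left-cancellation against the hypothesised $B$ via the decomposition $A \setminus B = (A \cap S) \sqcup \p[\big]{(A \setminus C) \setminus B}$, and handle the second equality by the purely set-theoretic observation that $C \setminus B = S$ fixes $B \cap C = C \setminus S$. The only cosmetic difference is that the paper first decomposes $A$ into $(A \setminus C) \sqcup (A \cap C \cap B) \sqcup (A \cap S)$ and then removes $B$, whereas you decompose $A \setminus B$ directly; the content is identical.
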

%--------------------------------%
\begin{proof}
For the first equality, we need to check that if two distinct $A_1, A_2 \in \cA$ satisfy $A_1 \cap S = P = A_2 \cap S$, then they give rise to distinct sets in $\cA_{C,S,P}$, meaning that $A_1 \setminus C \neq A_2 \setminus C$.
Suppose otherwise that $A_1 \setminus C = A_2 \setminus C$.
Let $B \in \cB$ have $C \setminus B = S$, or equivalently $C \setminus S = C \cap B$.
We have
\begin{align*}
    A_1  &= (A_1 \setminus C) 
    \sqcup (A_1 \cap (C \setminus S)) \sqcup (A_1 \cap S) = (A_1 \setminus C) \sqcup (A_1 \cap C \cap B) \sqcup P,
\end{align*}
and similarly, $A_2 = (A_2 \setminus C) \sqcup (A_2 \cap C \cap B) \sqcup P$.
Therefore,
\[A_1 \setminus B = \p[\big]{ (A_1 \setminus C) \setminus B } \sqcup P = \p[\big]{ (A_2 \setminus C) \setminus B } \sqcup P = A_2 \setminus B.\]
Since $(\cA, \cB)$ is a cancellative pair, we get $A_1 = A_2$.

For the second inequality, if $B_1, B_2 \in \cB$ are such that $B_1 \cap C = C \setminus S = B_2 \cap C$ and $B_1 \setminus C = B_2 \setminus C$, then
$B_1 = (B_1 \setminus C) \sqcup (B_1 \cap C) = (B_2 \setminus C) \sqcup (B_2 \cap C) = B_2$, completing the proof.
\end{proof}
%--------------------------------%

\Cref{obs:filter-size} tells us that for every $A' \in \cA_{C,S,P}$, there is \emph{unique} set $A\in \cA$ such that $A'=A\setminus C$. We say that $A$ is the \emph{associated} set of $A'$. Similarly, the associated set for $B' \in \cB_{C,S}$ is the unique set $B \in \cB$ satisfying $B' = B\setminus C$.

Unfortunately, considering the filtered pair of a recovering pair may destroy the recovering property, but as we see bellow, it preserves the cancellation property.

%--------------------------------%
\begin{observation}
\label{obs:filter-left-recovering}
Let $(\cA, \cB)$ be a recovering pair over a set $X$.
Then for any sets $P \subseteq S \subseteq C \subseteq X$, the filtered pair $(\cA_{C,S,P}, \cB_{C,S})$ is cancellative over $X \setminus C$.
\end{observation}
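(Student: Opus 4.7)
The plan is to verify the cancellative property of $(\cA_{C,S,P}, \cB_{C,S})$ by lifting each cancellation equation in $X \setminus C$ to a cancellation equation in $X$ that involves the associated sets, and then using the fact that any recovering pair is cancellative. If $\cB_{C,S}$ is empty, both halves of the cancellative property are vacuous, so we may assume $\cB_{C,S} \neq \emptyset$, meaning there exists $B \in \cB$ with $C \setminus B = S$, which puts us in the setting of~\Cref{obs:filter-size} and gives well-defined associated sets.

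For the left-cancellation, take $A'_1, A'_2 \in \cA_{C,S,P}$ and $B'_1 \in \cB_{C,S}$ with associated $A_1, A_2 \in \cA$ and $B_1 \in \cB$. The key computation is that
\[
    A_i \setminus B_1 = (A'_i \setminus B'_1) \,\sqcup\, P,
\]
which I would verify by splitting $A_i = (A_i \setminus C) \sqcup (A_i \cap C)$, observing that $A'_i \subseteq X \setminus C$ so $A'_i \setminus B_1 = A'_i \setminus B'_1$, and noting that $B_1 \cap C = C \setminus S$ forces $(A_i \cap C) \setminus B_1 = A_i \cap S = P$. This identity is independent of $i$ on the $P$-side, so $A'_1 \setminus B'_1 = A'_2 \setminus B'_1$ implies $A_1 \setminus B_1 = A_2 \setminus B_1$, and the cancellative property (a consequence of the recovering property) yields $A_1 = A_2$, hence $A'_1 = A'_2$.

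For the right-cancellation the argument is symmetric. With $B'_1, B'_2 \in \cB_{C,S}$ and $A'_1 \in \cA_{C,S,P}$, lifting to $B_1, B_2, A_1$ and computing analogously gives
\[
    B_i \setminus A_1 = (B'_i \setminus A'_1) \,\sqcup\, \bigl((C \setminus S) \setminus A_1\bigr),
\]
where again the second summand does not depend on $i$ because $B_i \cap C = C \setminus S$ is fixed. Thus $B'_1 \setminus A'_1 = B'_2 \setminus A'_1$ lifts to $B_1 \setminus A_1 = B_2 \setminus A_1$, and the cancellative property of $(\cA, \cB)$ gives $B_1 = B_2$, hence $B'_1 = B'_2$.

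There is no real obstacle here beyond being careful with the set algebra; the entire content is that $B_1 \cap C = C \setminus S$ and $A_i \cap S = P$ are fixed, so the extra ``on $C$'' contributions to $A_i \setminus B_1$ and $B_i \setminus A_1$ are constant across $i$. It is worth flagging that the proof in fact only uses the cancellative property of $(\cA, \cB)$, not the full recovering property, and that the filtered pair need not be recovering — this is precisely the asymmetry we exploit later, and it is the reason only cancellativity is claimed in the statement.
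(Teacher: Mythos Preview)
Your proof is correct and follows essentially the same lifting-to-associated-sets approach as the paper, with the same key identities $A_i \setminus B_1 = (A'_i \setminus B'_1) \sqcup P$ and $B_i \setminus A_1 = (B'_i \setminus A'_1) \sqcup \bigl((C \setminus S) \setminus A_1\bigr)$.

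One small difference worth noting: on the left side the paper actually proves more than the observation states, namely that the filtered pair is \emph{left-recovering} (allowing two different $B'_1, B'_2$), and for that it invokes the recovering property of $(\cA,\cB)$ rather than just cancellativity. Your version proves only left-cancellation (a single $B'_1$), which suffices for the statement and, as you correctly observe, uses only the cancellative property of $(\cA,\cB)$. So your closing remark that ``only cancellativity is used'' is true of your argument but not of the paper's, which deliberately records the stronger one-sided recovering conclusion.
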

%--------------------------------%
\begin{proof}
In fact, we show that for sets $A'_1, A'_2 \in \cA_{C,S,P}$ and $B'_1, B'_2 \in \cB_{C,S}$ the recovering property is preserved on one side, namely, they satisfy (\ref{eq:left-recovering}).
However, while they satisfy (\ref{eq:right-cancellation}), they need not satisfy (\ref{eq:right-recovering}), and thus the filtered pair is cancellative but not necessarily recovering.

Let $A'_1, A'_2 \in \cA_{C,S,P}$ and $B'_1, B'_2 \in \cB_{C,S}$ and let $A_1, A_2 \in \cA$ and $B_1, B_2 \in \cB$ be their associated sets.
We have
\begin{align*}
    A_1 \setminus B_1
    &= \p[\big]{(A_1\setminus C) \setminus (B_1 \setminus C)}
    \sqcup \p[\big]{(A_1 \cap C) \setminus (B_1 \cap C)} \\
    &= (A'_1 \setminus B'_1) \sqcup \p[\big]{(A_1 \cap C) \setminus (C \setminus S)} \\
    &= (A'_1 \setminus B'_1) \sqcup (A_1 \cap S) \\
    &= (A'_1 \setminus B'_1) \sqcup P.
\end{align*}
Similarly, we get $A_2 \setminus B_2 = (A'_2 \setminus B'_2) \sqcup P$.
Thus, if we assume that $A'_1 \setminus B'_1 = A'_2 \setminus B'_2$, we obtain
\begin{align*}
    A_1 \setminus B_1 = (A'_1 \setminus B'_1) \sqcup P = (A'_2 \setminus B'_2) \sqcup P
    = A_2 \setminus B_2.
\end{align*}
Since $(\cA, \cB)$ is recovering, we get that $A_1 = A_2$, and consequently $A'_1 = A'_2$.

Now let $B'_1, B'_2 \in \cB_{C,S}$ and $A' \in \cA_{C,S,P}$ and let $B_1, B_2 \in \cB$ and $A \in \cA$ be their associated sets.
We have
\begin{align*}
    B_1 \setminus A = \p[\big]{(B_1\setminus C) \setminus (A \setminus C)}
    \sqcup \p[\big]{(B_1 \cap C) \setminus (A \cap C)} = (B'_1 \setminus A') \sqcup \p[\big]{(C \setminus S) \setminus A },
\end{align*}
and similarly $B_2\setminus A = (B'_2 \setminus A') \sqcup \p[\big]{(C \setminus S) \setminus A }$.
Thus, if we assume that $B'_1 \setminus A' = B'_2 \setminus A'$, we obtain
\begin{align*}
    B_1 \setminus A = (B'_1 \setminus A') \sqcup \p[\big]{(C \setminus S) \setminus A } = (B'_2 \setminus A') \sqcup \p[\big]{(C \setminus S) \setminus A }
    = B_2 \setminus A.
\end{align*}
Since $(\cA, \cB)$ is recovering and in particular cancellative, we get that $B_1 = B_2$, and consequently $B'_1 = B'_2$.
\end{proof}
%--------------------------------%

Finally, we now show that given $S \subseteq C \subseteq X$, there is always a choice of $P \subseteq S$ for which the family $\cA_{C,S,P}$ in the filtered pair is reasonably sized.
Recall the binary entropy function as defined in~\eqref{eq:hp}.

%--------------------------------%
\begin{proposition}
\label{prop:AClarge}
Let $(\cA, \cB)$ be a recovering pair over $X$ and recall the definition of $a_i$ from~\eqref{eq:aibi}. Let $S \subseteq C \subseteq X$ be such that $S = C \setminus B$ for some $B \in \cB$.
Then there exists a subset $P \subseteq S$ such that
\begin{equation*}
    \log_2 \p[\Bigg]{ \frac{\card{\cA_{C,S,P}}}{\card{\cA}}} \geq -\sum_{i \in S} h(a_i).
\end{equation*}
%with $a_i$ as defined in \eqref{eq:aibi}.
\end{proposition}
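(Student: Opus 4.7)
The plan is a short entropy argument that essentially reduces the claim to a standard fact about the mode of a distribution with bounded entropy. First, I would reduce the statement to finding a mode of a natural probability distribution on $2^S$. Since the hypothesis provides some $B \in \cB$ with $S = C \setminus B$, \Cref{obs:filter-size} applies and gives
\[\card{\cA_{C,S,P}} = \card{\set{A \in \cA \st A \cap S = P}}\]
for every $P \subseteq S$. Thus, if $A$ is sampled uniformly from $\cA$, the probability that $A \cap S = P$ is exactly $\card{\cA_{C,S,P}}/\card{\cA}$. It therefore suffices to exhibit some $P \subseteq S$ with $\Prob(A \cap S = P) \geq 2^{-\sum_{i \in S} h(a_i)}$.

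To produce such a $P$, I would bound the entropy of this distribution via subadditivity. For each $i \in S$, let $X_i$ be the indicator of the event $\set{i \in A}$, which is Bernoulli with parameter $a_i$ by~\eqref{eq:aibi}; in particular $H(X_i) = h(a_i)$. The joint vector $W \defined (X_i)_{i \in S}$ records $A \cap S$, so by subadditivity
\[H(W) \leq \sum_{i \in S} H(X_i) = \sum_{i \in S} h(a_i).\]
Applying \Cref{obs:entropy} to $W$, its most likely value corresponds to some $P \subseteq S$ with $\Prob(W = P) \geq 2^{-H(W)} \geq 2^{-\sum_{i \in S} h(a_i)}$. Taking $\log_2$ of both sides and recalling that this probability equals $\card{\cA_{C,S,P}}/\card{\cA}$ yields the claim.

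I do not expect any serious obstacle in this proof: the cancellative / recovering hypothesis on $(\cA, \cB)$ enters only through the citation of \Cref{obs:filter-size}, which is what ensures that distinct sets $A \in \cA$ with $A \cap S = P$ produce distinct elements of $\cA_{C,S,P}$ and hence that the ratios $\card{\cA_{C,S,P}}/\card{\cA}$ assemble into a genuine probability distribution on $2^S$. Once this identification is in place, the remainder is a textbook combination of entropy subadditivity and \Cref{obs:entropy}, applied to the single family $\cA$ without ever having to refer back to $\cB$ or to the set $C$.
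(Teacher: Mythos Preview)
Your proposal is correct and follows essentially the same approach as the paper: sample $A$ uniformly from $\cA$, use subadditivity to bound the entropy of $A \cap S$ by $\sum_{i\in S} h(a_i)$, invoke \Cref{obs:entropy} to find a mode $P$ with probability at least $2^{-H}$, and identify that probability with $\card{\cA_{C,S,P}}/\card{\cA}$ via \Cref{obs:filter-size}. The only cosmetic difference is the order in which \Cref{obs:filter-size} is invoked.
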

%--------------------------------%
\begin{proof}
Let $A \in \cA$ be chosen uniformly at random, and note that we have $a_i = \prob{i \in A}$ for all $i \in X$.
Let $X^S \defined (X_i)_{i \in S}$ be the characteristic function of $A \cap S$.
By the subadditivity of entropy, we have 
\begin{equation*}
    H(X^S) \leq \sum_{i \in S} H(X_i) = \sum_{i \in S} h(a_i).
\end{equation*}
Let $P \subseteq S$ be such that $\prob{A \cap S = P}$ is maximal.
By~\Cref{obs:entropy} we get that $\prob{A \cap S = P } \geq 2^{-H(X^S)}$.
By \Cref{obs:filter-size}, as recovering implies cancellative, and the above we get
\begin{equation*}
    \frac{\card{\cA_{C,S,P}}}{\card{\cA}}
    = \prob{A \cap S = P}
    \geq 2^{-H(X^S)}
    \geq 2^{-\sum_{i \in S} h(a_i)}. \qedhere
\end{equation*}
\end{proof}
%--------------------------------%

\section{Uniform pairs and main lemma}
\label{sec:mainlem}

The following proposition is the main technical tool in our argument.
Roughly, it says that if a recovering pair $(\cA, \cB)$ does not have a relatively dense filtered pair, then it cannot be too large.
For the remainder of this section, we only consider filtered pairs such that $C = A$ and $S = A\setminus B$ for some sets $A \in \cA$ and $B \in \cB$.
%--------------------------------%
\begin{proposition}
\label{prop:left-recovering-bound}
Let $(\cA, \cB)$ be a $k$-uniform recovering pair over $X$ and suppose that for every $A \in \cA$, $B \in \cB$ and $P \subseteq A \setminus B$ we have
\begin{align}
\label{eq:filtered-condition}
    \frac{\card{\cA_{A,A \setminus B,P}}\card{\cB_{A,A \setminus B}}}{\card{\cA}\card{\cB}} \leq \theta^{-k},
\end{align}
for some constant $\theta$.
Then we have
\begin{equation}
\label{eq:left-sum}
    \log_2 \card{\cA} \leq \sum_{i \in X} f(a_i, b_i, \theta),
\end{equation}
where $f(x,y,t) \defined x(1-y)h(x) + h \p[\big]{x(1-y)} - x \log_2 t$.
\end{proposition}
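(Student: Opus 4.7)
The plan is to run an entropy argument with $A_0$ uniform on $\cA$ and $B$ uniform on $\cB$, drawn independently, and to track the random set $Y \defined A_0 \setminus B$. The key observation, which is the only place where the full recovering property~\eqref{eq:left-recovering} is used, is that $A_0$ is a deterministic function of $Y$: if $A_0 \setminus B = A_0' \setminus B'$ then $A_0 = A_0'$, and hence $H(A_0 \mid Y) = 0$. Applying the chain rule to the joint entropy $H(A_0, Y)$ in both orders then yields
\[
    \log_2 \card{\cA} \;=\; H(A_0) \;=\; H(Y) - H(Y \mid A_0),
\]
so the proof reduces to upper bounding $H(Y)$ and lower bounding $H(Y \mid A_0)$.

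For the first term, independence of $A_0$ and $B$ gives $\prob{i \in Y} = a_i(1 - b_i)$ for each $i \in X$, so subadditivity of entropy yields $H(Y) \le \sum_{i \in X} h(a_i(1 - b_i))$. For the second term, I would apply the hypothesis~\eqref{eq:filtered-condition} pointwise. For each $a_0 \in \cA$ and each $s \subseteq a_0$ in the support of $Y$ given $A_0 = a_0$, pick some $b_0 \in \cB$ with $a_0 \setminus b_0 = s$, and invoke \Cref{prop:AClarge} to produce $P \subseteq s$ with $\card{\cA_{a_0,s,P}} \ge \card{\cA} \cdot 2^{-\sum_{i \in s} h(a_i)}$. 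Substituting this into~\eqref{eq:filtered-condition} and using \Cref{obs:filter-size} to identify $\card{\cB_{a_0,s}} / \card{\cB}$ with $\prob{Y = s \given A_0 = a_0}$ produces the pointwise bound
\[
    \prob{Y = s \given A_0 = a_0} \;\le\; \theta^{-k} \cdot 2^{\sum_{i \in s} h(a_i)}.
\]
Taking $-\log_2$, integrating over the conditional law of $Y$ given $A_0 = a_0$, and finally averaging over $A_0$ gives $H(Y \mid A_0) \ge k \log_2 \theta - \sum_{i \in X} a_i(1 - b_i) h(a_i)$.

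Combining the two bounds and rewriting $k \log_2 \theta$ as $\sum_i a_i \log_2 \theta$ via $k$-uniformity (recall that $\sum_i a_i = k$) yields $\log_2 \card{\cA} \le \sum_{i \in X} f(a_i, b_i, \theta)$, as claimed. I expect the main conceptual step to be identifying the right entropy decomposition: once the auxiliary variable $Y = A_0 \setminus B$ and the identity $H(A_0 \mid Y) = 0$ are in place, the remainder is a mechanical combination of subadditivity with the filter hypothesis as processed through \Cref{prop:AClarge} and \Cref{obs:filter-size}. It is worth noting that without the recovering property, one could only conclude the weaker inequality $\log_2 \card{\cA} \le H(Y)$, losing the sharpening by the full conditional entropy $H(Y \mid A_0)$ that the filter hypothesis provides.
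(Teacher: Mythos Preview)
Your proof is correct and follows essentially the same approach as the paper. The only difference is packaging: you invoke the chain rule directly to write $H(A_0)=H(Y)-H(Y\mid A_0)$ and then bound the two terms, whereas the paper derives the equivalent pointwise identity $\log_2\card{\cA}=\log_2(\card{\cB_{A,A\setminus B}}/\card{\cB})-\log_2\prob{Z_\cA\setminus Z_\cB=A\setminus B}$ for each fixed $(A,B)$, averages over random $(A,B)$, and then passes to a minimiser; the underlying computation is the same and uses \Cref{obs:filter-size}, \Cref{prop:AClarge}, and subadditivity in the identical places.

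One small quibble: your closing remark is slightly off. The inequality $\log_2\card{\cA}\le H(Y)$ already \emph{requires} the recovering property (it is exactly $H(A_0\mid Y)=0$); without it this bound is not available at all. The filter hypothesis is what lets you subtract the further $H(Y\mid A_0)$ term, not what supplies it. This does not affect the proof itself.
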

%--------------------------------%
\begin{proof}
Let $Z_{\cA}$ and $Z_\cB$ be two independent random variables choosing uniformly at random sets from $\cA$ and from $\cB$, respectively.
Fix $A \in \cA$ and $B \in \cB$, then we have
\begin{equation}
\label{eq:prob}
    \prob{Z_{\cA} \setminus Z_{\cB} = A \setminus B}
    = \frac{\card{\set{(A',B') \in \cA \times \cB \st A' \setminus B' = A \setminus B } }}{\card{\cA} \card{\cB}}.
\end{equation}
Since the pair $(\cA, \cB)$ is recovering, we have
\begin{align}
\label{eq:recppty}
    \card{\set{(A',B') \in \cA \times \cB \st A' \setminus B' = A \setminus B } } = \card{\set{B' \in \cB \st A \setminus B' = A \setminus B } }.
\end{align}
Additionally, by \Cref{obs:filter-size} we have $\card{\set{B' \in \cB \st A \setminus B' = A \setminus B} } = \card{\cB_{A,A \setminus B}}$.
Therefore, \eqref{eq:prob} is equivalent to
\begin{equation}
\label{eq:size-of-A}
    \log_2 \card{\cA}
    = \log_2 \p[\Bigg]{ \frac{\card{\cB_{A,A\setminus B}}}{\card{\cB}} } - \log_2 \prob{Z_{\cA} \setminus Z_{\cB} = A \setminus B}.
\end{equation}

By \Cref{prop:AClarge}, there exists a subset $P \subseteq A \setminus B$ for which
\begin{equation*}
    \log_2 \p[\Bigg]{\frac{\card{\cA}}{\card{\cA_{A,A \setminus B,P}}} }
    \leq \sum_{i \in A \setminus B} h(a_i).
\end{equation*}
By rearranging condition~\eqref{eq:filtered-condition} and by the above we get
\begin{equation}
\label{eq:logB}
    \log_2\p[\Bigg]{ \frac{\card{\cB_{A,A \setminus B}}}{\card{\cB}} } \leq  \sum_{i \in A \setminus B} h(a_i) - k \log_2 \theta.
\end{equation}

Therefore, \eqref{eq:size-of-A} and \eqref{eq:logB} together imply
\begin{align*}
    \log_2 \card{\cA}
    &\leq \sum_{i \in A \setminus B} h(a_i) - \log_2 \prob{Z_{\cA} \setminus Z_{\cB} = A \setminus B} - k \log_2 \theta.
\end{align*}
Recall that $k$-uniformity implies that $k = \sum_{i \in X}a_i = \sum_{i \in X} b_i$, so we obtain
\begin{equation}
\label{eq:bound-random}
    \log_2 \card{\cA}
    \leq \sum_{i \in A \setminus B} h(a_i) - \log_2 \prob{Z_{\cA} \setminus Z_{\cB} = A \setminus B} - \sum_{i \in X} a_i \log_2 \theta.
\end{equation}

Define the function
\begin{equation*}
    F(A,B) \defined  \sum_{i \in A \setminus B} h(a_i) - \log_2 \prob{Z_{\cA} \setminus Z_{\cB} = A \setminus B}.
\end{equation*}
Choosing $A \in \cA$ and $B \in \cB$ independently and uniformly at random, we get
\begin{align*}
    \expec{F(A,B)} &= \sum_{i \in X} a_i(1 - b_i)h(a_i)  + H(Z_{\cA} \setminus Z_{\cB}) \\
    &\leq \sum_{i \in X} \p[\big]{ a_i(1 - b_i)h(a_i) + h\p[\big]{a_i(1 - b_i) } },
\end{align*}
where $H(Z_{\cA} \setminus Z_{\cB}) \leq \sum_{i \in X} h\p[\big]{a_i(1 - b_i)}$ follows from the subadditivity of entropy.
Considering $A_0 \in \cA$ and $B_0 \in \cB$ for which $F(A_0,B_0) \leq \expec{F(A,B)}$ in~\eqref{eq:bound-random} we get
\begin{align*}
    \log_2|\cA| \le \sum_{i \in X} \p[\big]{ a_i(1 - b_i)h(a_i) + h\p[\big]{a_i(1 - b_i) } - a_i\log_2 \theta} = f(a_i, b_i, \theta),
\end{align*}
implying the desired bound \eqref{eq:left-sum}.
\end{proof}
%--------------------------------%
We remark that the crux of the proof of~\Cref{prop:left-recovering-bound} is captured in~\eqref{eq:recppty}.
Indeed, unlike other parts of the argument, \eqref{eq:recppty} holds due to the recovering property and does not hold for cancellative pairs in general.
This is what allows us to attain the desired separation in~\Cref{thm:main-sandglass}.

The proof of \Cref{prop:left-recovering-bound} used only that $A\setminus B$ determines $A$ but not that $B\setminus A$ determines $A$. Using the full power of the recovering property, we can symmetrise \Cref{prop:left-recovering-bound}.

%--------------------------------%
\begin{corollary}
\label{cor:recovering-bound}
Let $(\cA, \cB)$ be a $k$-uniform recovering pair over $X$ and suppose that there exists a constant $\theta$ such that the following holds.
For every $A \in \cA$, $B \in \cB$ and for every $P_1 \subseteq A \setminus B$ and $P_2 \subseteq B\setminus A$ we have $\card{\cA_{A,A \setminus B, P_1}} \card{\cB_{A,A \setminus B}} \leq \theta^{-k} \card{\cA} \card{\cB}$, and $\card{\cB_{B,B \setminus A, P_2}} \card{\cA_{B,B \setminus A}} \leq \theta^{-k} \card{\cA} \card{\cB}$.
Then
\begin{equation*}
    \log_2 \card{\cA} \card{\cB}
    \leq \sum_{i \in X} g(a_i, b_i, \theta),
\end{equation*}
where $g(x,y,t) \defined f(x,y,t) + f(y,x,t)$, and $f$ is as in \Cref{prop:left-recovering-bound}.
\qed
\end{corollary}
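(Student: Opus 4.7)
The plan is to apply Proposition~\ref{prop:left-recovering-bound} twice: once as stated to bound $\log_2 \card{\cA}$, and once with the roles of $\cA$ and $\cB$ interchanged to bound $\log_2 \card{\cB}$, then add the two bounds.

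First, I would observe that the recovering conditions~\eqref{eq:left-recovering} and~\eqref{eq:right-recovering} are symmetric in $(\cA, \cB)$: swapping $\cA \leftrightarrow \cB$ simply interchanges the roles of the two implications, so $(\cB, \cA)$ is also a $k$-uniform recovering pair over $X$. Under this swap, the relative sizes $a_i$ and $b_i$ defined in~\eqref{eq:aibi} are also swapped, and the filtered pair $(\cA_{A, A\setminus B, P_1}, \cB_{A, A\setminus B})$ of $(\cA, \cB)$ becomes the filtered pair $(\cB_{B, B\setminus A, P_2}, \cA_{B, B\setminus A})$ of $(\cB, \cA)$ (with the roles of the ``$A$'' and ``$B$'' sets in the definition of a filtered pair swapped accordingly). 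Thus the first hypothesis of the corollary is exactly condition~\eqref{eq:filtered-condition} of Proposition~\ref{prop:left-recovering-bound} applied to $(\cA, \cB)$, and the second hypothesis is exactly condition~\eqref{eq:filtered-condition} applied to $(\cB, \cA)$.

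With this verification in hand, applying Proposition~\ref{prop:left-recovering-bound} directly to $(\cA, \cB)$ yields
\begin{equation*}
    \log_2 \card{\cA} \leq \sum_{i \in X} f(a_i, b_i, \theta),
\end{equation*}
while applying it to $(\cB, \cA)$ (and remembering that the roles of the parameters $a_i, b_i$ are swapped) yields
\begin{equation*}
    \log_2 \card{\cB} \leq \sum_{i \in X} f(b_i, a_i, \theta).
\end{equation*}
Summing these two bounds gives
\begin{equation*}
    \log_2 \card{\cA}\card{\cB} \leq \sum_{i \in X} \bigl( f(a_i, b_i, \theta) + f(b_i, a_i, \theta) \bigr) = \sum_{i \in X} g(a_i, b_i, \theta),
\end{equation*}
as required.

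There is no real obstacle here; the only thing to be careful about is the bookkeeping when swapping roles, specifically checking that the second hypothesis of the corollary corresponds precisely to hypothesis~\eqref{eq:filtered-condition} of Proposition~\ref{prop:left-recovering-bound} for the pair $(\cB, \cA)$. Since the definition of filtered pair is itself asymmetric in its two arguments (the first set is ``restricted by intersection with $S = A \setminus B$'', the second by the condition $C \setminus B = S$), one must check that relabelling gives the stated second hypothesis; this is a direct unpacking of definitions.
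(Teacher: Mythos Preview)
Your proposal is correct and matches the paper's intended argument: the paper marks the corollary with a bare \qed, having just remarked that Proposition~\ref{prop:left-recovering-bound} only used one side of the recovering property and can therefore be symmetrised. Applying the proposition to $(\cA,\cB)$ and to $(\cB,\cA)$ and summing is exactly that symmetrisation, and your bookkeeping about the swap of $a_i, b_i$ and the filtered-pair hypotheses is the right check.
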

%--------------------------------%

%---------------------------------------------------------------%

\section{Beyond cancellation}
\label{sec:beyond}

We are now ready to prove \Cref{thm:RecUpr}.

%--------------------------------%
\begin{proof}[Proof of \Cref{thm:RecUpr}]
We will show that \eqref{eq:ABupr} holds for every $k$-uniform recovering pair $(\cA, \cB)$, for any $1 \leq k \leq n$, and then the result will follow by \Cref{lem:uniform}.
Set $\alpha = 0.27$, $\theta = 2.222$, and $\mu = \max \set{2.2499, \theta^{\alpha} \cdot \mu_\can^{1-\alpha}}$.
We prove by induction that $\card{\cA}\card{\cB} \leq \mu^n$.

The base case $n = 1$ is trivially true, so fix $n \geq 2$.
Let $(\cA, \cB)$ be a $k$-uniform recovering pair over a ground set $X$ of size $n$ for some $1\le k \le n$, and for every $i \in X$, consider the restrictions $\cA_i$, $\cA'_i$, $\cB_i$ and $\cB'_i$, along with the parameters $a_i$, $a'_i$, $b_i$ and $b'_i$ as defined in \eqref{eq:aibi}.

Note that that $\theta < \mu_\can$, $0 \leq \alpha \leq 1/2$, $1 / (1 - 2\alpha) \leq \mu$ and $\theta^{\alpha} \mu_\can^{1 - \alpha} \leq \mu$.
If $k \leq \alpha n$, we proceed as in \cite{Janzer2018-pb}.
Indeed, since $(\cA, \cB)$ is $k$-uniform, we have
\begin{equation*}
    \frac{1}{n} \sum_{i \in X} a_i = \frac{1}{n} \sum_{i \in X} b_i = \frac{k}{n} \leq \alpha,
\end{equation*}
so there is $i \in X$ with $a_i + b_i \leq 2\alpha$, and hence with $a'_i + b'_i \geq 2 - 2\alpha$.
This implies that $a'_i b'_i \geq 1 - 2 \alpha \ge 0$, as $\alpha \leq 1/2$.
By \Cref{obs:simple-restriction}, we have that $(\cA'_i, \cB'_i)$ is a recovering pair over $X \setminus \set{i}$.
The induction hypothesis then gives that $\card{\cA'_i}\card{\cB'_i} \leq \mu^{n-1}$, thus
\begin{align*}
    \card{\cA} \card{\cB}
    &= \frac{\card{\cA_i'}\card{\cB_i'}}{a'_i b'_i}
    \leq \frac{1}{1 - 2\alpha} \mu^{n-1} \leq \mu^n,   
\end{align*}
since $1/(1 - 2\alpha) \leq \mu$, and we are done.

Henceforth, we assume $k \geq \alpha n$, and moreover, that $a'_i b'_i \leq 1 - 2 \alpha$ holds for all $i \in X$.
Assume further, that for every $A \in \cA$, $B \in \cB$ and for every $P_1 \subseteq A \setminus B$ and $P_2 \subseteq B\setminus A$, we have
\begin{align*}
    \frac{\card{\cA_{A,A \setminus B,P_1}}\card{\cB_{A,A \setminus B}}}{\card{\cA}\card{\cB}} \leq \theta^{-k} && \text{ and } && \frac{\card{\cB_{B,B \setminus A,P_2}}\card{\cA_{B,B \setminus A}}}{\card{\cA}\card{\cB}} \leq \theta^{-k}.
\end{align*}
Therefore, by \Cref{cor:recovering-bound} we get that
\begin{equation}
\label{eq:logAB}
    \frac{1}{n} \log_2 \card{\cA} \card{\cB}
    \leq \frac{1}{n}\sum_{i \in X} g(a_i, b_i, \theta),
\end{equation}
where
\[g(x,y,t) \defined f(x,y,t) + f(y,x,t) \]
and
\[f(x,y,t) \defined x(1 - y)h(x) + h\p[\big]{x(1 - y) } - x \log_2 t. \]

Recall that $(1 - a_i) (1 - b_i) = a'_i b'_i \leq 1 - 2 \alpha$ for all $i \in X$.
Given this restriction, we use the following claim to bound form above the value of the function $g(a_i,b_i,\theta)$.
As the proof of \Cref{cl:optimisation} is a straightforward but technical optimization argument, we include it as an appendix.
\begin{claim}
\label{cl:optimisation}
    For $x,y \in (0,1)$ and $\theta = 2.222$ we have
    \[g(x,y,\theta) \le \log_2(2.2499). \]
\end{claim}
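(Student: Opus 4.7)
The plan is to verify this optimisation inequality by reducing the two-variable problem to a one-variable problem on the diagonal and then applying elementary calculus.

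First, I would extend $g$ continuously to the closed square $[0,1]^2$ using the convention $0 \log_2 0 = 0$, so that a global maximum is attained. The boundary is handled directly: for example, $g(0, y, \theta) = y h(y) + h(y) - y \log_2 \theta$, and symmetrically for $y = 0$, $x = 1$, $y = 1$. Each of these is a one-variable expression whose maximum over $[0,1]$ is easily shown, by a direct derivative computation, to be strictly less than $\log_2(2.2499)$. Hence the maximum of $g$ on $[0,1]^2$ is attained in the interior.

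Next, I would exploit the symmetry $g(x,y,\theta) = g(y,x,\theta)$ to reduce to the diagonal $x = y$. Writing $s = x+y$ and $d = x-y$, the map $d \mapsto g\bigl((s+d)/2, (s-d)/2, \theta\bigr)$ is even in $d$, so the main task is to show that for each admissible $s$ it attains its maximum at $d = 0$. This should follow from concavity in $d$, which one verifies by computing second derivatives; the dominant contributions come from the $h(x(1-y))$ and $h(y(1-x))$ terms, whose Hessian is controlled using $h''(p) = -1/(p(1-p)\ln 2) < 0$. An alternative route is to analyse the system $\partial_x g = 0 = \partial_y g$ at any interior critical point and show, by comparing the two equations, that the symmetric form of $g$ forces $x = y$.

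Finally, setting $G(x) \defined g(x,x,\theta) = 2\bigl[x(1-x) h(x) + h(x(1-x)) - x \log_2 \theta\bigr]$, the problem reduces to a one-dimensional maximisation on $(0,1)$. Differentiating and using $h'(p) = \log_2\bigl((1-p)/p\bigr)$, one obtains an implicit equation for the critical point, which can be solved numerically to give a unique maximiser $x^* \approx 0.34$ at which $G(x^*) < \log_2(2.2499) \approx 1.17019$.

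The main obstacle is the rigorous diagonal-reduction step, since the concavity in $d$ is not obviously sign-definite in closed form. If a clean analytic argument proves elusive, a reliable fallback is computer-assisted interval arithmetic: partition $[0,1]^2$ into a sufficiently fine grid of boxes, bound $g$ on each box by its value at the centre plus a uniform gradient estimate, and reduce the claim to finitely many numerical checks. This is consistent with the authors' remark that the proof is ``straightforward but technical.''
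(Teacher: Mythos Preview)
Your plan has a genuine gap in both branches.

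\textbf{The diagonal reduction.} You acknowledge yourself that this is ``the main obstacle,'' and indeed the argument you sketch does not go through as stated. Concavity of $h$ does not transfer to concavity of $d \mapsto h\bigl(x(1-y)\bigr) + h\bigl(y(1-x)\bigr)$ once $x = (s+d)/2$ and $y = (s-d)/2$, because the inner arguments are quadratic in $d$; the chain-rule second derivative picks up a convex contribution from $h'$, which is unbounded. Likewise the cross terms $x(1-y)h(x) + y(1-x)h(y)$ have no evident sign for $\partial_d^2$. The alternative route via the critical-point equations $\partial_x g = \partial_y g = 0$ would require ruling out asymmetric solutions, which you have not done. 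So as written, the analytic part is a hope rather than an argument.

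\textbf{The fallback.} Your grid-plus-gradient plan is close in spirit to what the paper actually does, but it contains a subtle error: $g$ is \emph{not} Lipschitz on $(0,1)^2$. The term $h\bigl(x(1-y)\bigr)$ has $x$-derivative $(1-y)\,h'\bigl(x(1-y)\bigr)$, and $h'(p) = \log_2\bigl((1-p)/p\bigr) \to +\infty$ as $p \to 0^+$, so no uniform gradient bound exists near the coordinate edges. Handling only the boundary $\partial[0,1]^2$ does not help, since the blow-up occurs throughout a neighbourhood of it.

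The paper resolves exactly this issue, and then runs the grid argument you describe. It replaces $h$ by a truncated majorant
\[
h^*(z) \defined \begin{cases} h(z), & 0.01 < z < 0.99, \\ h(0.01), & \text{otherwise},\end{cases}
\]
so that $h \le h^*$ pointwise, defines $g^*$ analogously, and notes $g \le g^*$. Now $g^*$ \emph{is} Lipschitz on $[0,1]^2$ (with constant at most $25$), so a grid check with spacing $1/30000$ gives $\max g^* \le 1.1687 + 25/30000 < 1.1696 < \log_2(2.2499)$. Your fallback becomes correct once you insert this truncation step; without it, the ``uniform gradient estimate'' you invoke does not exist.
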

By~\eqref{eq:logAB} together with \Cref{cl:optimisation} we get
\begin{align*}
    \card{\cA} \card{\cB} \le 2.2499^n,
\end{align*}
proving the statement.

Hence, we may assume, without loss of generality, that there are sets $A \in \cA$, $B \in \cB$ and $P \subseteq A \setminus B$ such that
\begin{align*}
    \frac{\card{\cA_{A,A \setminus B,P}}\card{\cB_{A,A \setminus B}}}{\card{\cA}\card{\cB}} \geq \theta^{-k}.
\end{align*}
\Cref{obs:filter-left-recovering} implies that the filtered pair $(\cA_{A,A \setminus B,P}, \cB_{A,A \setminus B})$ is cancellative over $X \setminus A$.
Since $(\cA, \cB)$ is $k$-uniform and $|X| = n$, we have $\card{X \setminus A} = n-k$, and thus, by the induction hypothesis,
\begin{equation*}
    \card{\cA_{A,A \setminus B,P}}\card{\cB_{A,A \setminus B}} \leq \mu_\can^{n-k}.
\end{equation*}
Therefore, we get
\begin{equation*}
    \card{\cA} \card{\cB}
    \leq \theta^{k} \card{\cA_{A,A \setminus B,P}}\card{\cB_{A,A \setminus B}}
    \leq \theta^k \mu_\can^{n-k}. 
\end{equation*}
Since $k \geq \alpha n$ and $\theta < \mu_\can$, we get
\begin{align*}
    \card{\cA}\card{\cB}
    \leq \theta^k \mu_\can^{n-k} \leq \theta^{\alpha n} \mu_\can^{n- \alpha n} \leq \mu^n,
\end{align*}
finishing the proof.
\end{proof}
%--------------------------------%

\section{One-sided pairs}
\label{sec:onesided}
As stated in the introduction, in establishing the lower bound \Cref{thm:one-sided}, we mostly follow Tolhuizen's linear coding construction of a large cancellative pair \cite{Tolhuizen2000-ww}.
The proof of the upper bound was given in an argument by Ahlswede and Simonyi for one-sided recovering pairs from \cite{Ahlswede1994-st}, which we repeat here for the convenience of the reader.

\begin{proof}[Proof of~\Cref{thm:one-sided}]
    Let $(\cA, \cB)$ be a left-cancellative pair on $[n]$. For fixed $B\in \cB$, each set $A\in \cA$ has to have a different intersection with $[n]\setminus B$, whence $\card{\cA}\leq 2^{n-\card{B}}$. Thus, we obtain
    \begin{equation}\label{eq:one-sided-ub}
        \card{\cA}\card{\cB}=\sum_{i=0}^n \card{\cA}\card{\cB\cap [n]^{(i)}} \leq \sum_{i=0}^n 2^{n-i}\binom{n}{i}\leq 3^n.
    \end{equation}

    For the lower bound, we construct a left-cancellative pair $(\cA, \cB)$ with $|\cA||\cB| \ge 3^{n - o(n)}$. Given $n$, let $k\in [n]$ and consider a matrix $M\in \FF_2^{n\times (n-k)}$. We call $S\in [n]^{(n-k)}$ an \emph{information set} if the square submatrix of $M$ given by the rows indexed by $S$ is invertible. Denote the family of information sets of $M$ by $\cI(M)$. Associating vectors in $\FF_2^n$ with subsets of $[n]$ in the usual manner, we let $\cA_M$ be the image of $M$. Then, letting $\cB_M=\set{ [n] \setminus S \st S\in \cI(M)}$, the pair $(\cA_M,\cB_M)$ is left-cancellative. 
    
    Indeed, let $A, A'\in \cA_M$, $B\in \cB_M$ and let $v_A, v_{A'}\in \FF_2^{n-k}$ be vectors that give rise to $A$ and $A'$, respectively. If $A\setminus B=A'\setminus B$, then the 
    projections of $Mv_A$ and $Mv_{A'}$ to $\FF_2^{[n]\setminus B}$ are equal. However, these projections are the same as the images of $v_A$ and $v_{A'}$ under multiplication with the submatrix of $M$ given by the rows indexed by $[n]\setminus B$. Since this submatrix is invertible, the above can only happen when $v_A=v_{A'}$, meaning that $A=A'$.

    It thus remains only to find $k$ and $M$ so that $\card{\cA_M}\cdot\card{\cB_M}=2^{n-k}\card{\cI(M)}$ is at least $3^{n-o(n)}$. By considering the expected number of information sets for a uniformly random matrix, Tolhuizen showed (see Lemma 1 in \cite{Tolhuizen2000-ww}), that for all $k$ there exists $M$ such that
    \begin{equation*}
        \frac{\cI(M)}{\binom{n}{k}}\geq \prod_{i=1}^\infty (1-2^{-i})\approx 0.2888.
    \end{equation*}
    Thus, for every $k\in [n]$, there exists a left-cancelling pair of size at least
    \begin{equation*}
        \Omega\p[\big]{2^{n-k}\binom{n}{k}}.
    \end{equation*}
    Taking $k=\lfloor n/3\rfloor$ and applying Stirling's formula (or by recognizing that $2^{2n/3}\binom{n}{n/3}$ is the largest term of the binomial expansion in \eqref{eq:one-sided-ub}), we see that the above is $\Omega(3^n/\sqrt{n})=3^{n-o(n)}$, as desired.
\end{proof}

%---------------------------------------------------------------%
\paragraph{Acknowledgements.}

The first author was funded by UK Research and Innovation grant MR/W007320/2, and the second author was partially supported by ERC Starting Grant 101163189 and UKRI Future Leaders Fellowship MR/X023583/1.
%---------------------------------------------------------------%

\bibliography{sandglass-beyond-cancellation}

%---------------------------------------------------------------%

\appendix

\section{Proof of \texorpdfstring{\Cref{cl:optimisation}}{Claim 5.1}}
\label{sec:optiprob}

Recall that $f(x,y) \defined x(1-y)h(x) + h(x(1-y))$ and $g(x,y,t) \defined f(x,y) + f(y,x) - (x+y)\log_2 t$.
We restate the claim proved in this appendix.
\begin{repclaim}{cl:optimisation}
    For $x,y \in (0,1)$ and $\theta = 2.222$ we have
    \[g(x,y,\theta) \le \log_2(2.2499). \]
\end{repclaim}

\begin{proof}[Proof of \Cref{cl:optimisation}]
We show that $\theta$ satisfies conditions of a different optimisation problem.
Let $h^*(z)$ for $0<z<1$ be defined as follows. 
\begin{align*}
    h^*(z) = \begin{cases}
    h(z), &\text{ if } 0.01 < z < 0.99 \\
    h(0.01), &\text{ otherwise}
    \end{cases},
\end{align*}
and note that $h(z) \le h^*(z)$ for any $z \in [0,1]$.
Further define
\begin{align*}
    f^*(x,y) &\defined x(1-y)h^*(x) + h^*(x(1-y)) \\
    g^*(x,y,t) &\defined f^*(x,y) + f^*(y,x) - (x+y)\log_2 t,
\end{align*}
and note that, given $t$, we have $g(x,y,t) \le g^*(x,y,t)$ for any $x,y \in (0,1)$.
Hence, it is sufficient to show that for any $x,y \in (0,1)$ we have
\begin{align*}
    g^*(x,y,\theta) \le \log_2 (2.2499).
\end{align*}
    
It is routine to show that $g^*(x,y,\theta)$ is Lipschitz with a constant $25$ in either variable.
Hence, if we fix an integer $k$ and maximise $g^*(i/k,j/k,\theta)$ over $0 \leq i,j \leq k$, the maximum of $g^*(x,y,\theta)$ for all $0<x,y<1$ can be at most $25/k$ bigger.
Choosing $k=30 000$ and performing a computer check, we can hence bound $g^*(x,y,\theta)$ from above for all $0<x,y<1$ by $1.1687+\frac{25}{30 000}<1.1696$, while $\log_2 (2.2499)>1.1698$.
The statement follows.
\end{proof}

\end{document}